\documentclass[11pt]{article}
\usepackage{arxiv}
\usepackage[utf8]{inputenc}
\usepackage[T1]{fontenc}
\usepackage{hyperref}
\usepackage{url}
\usepackage{microtype}
\usepackage{amsmath,amssymb,amsfonts,mathtools}
\usepackage{amsthm}
\usepackage{color}
\usepackage{enumitem}
\newtheorem{theorem}{Theorem}
\newtheorem{lemma}{Lemma}
\newtheorem{corollary}[theorem]{Corollary}
\newtheorem{prop}[theorem]{Proposition}
\newtheorem{remark}[theorem]{Remark}

\newcommand{\Sph}{\mathbb{S}^{d-1}}
\newcommand{\weakto}{\Rightarrow}
\newcommand{\ip}[2]{\langle #1,#2\rangle}
\newcommand{\norm}[1]{\lVert #1\rVert}
\newcommand{\push}{\#}

\title{Weak convergence from projections along a positive-measure set of directions}

\author{
	Alejandro Cholaquidis \\
	Centro de Matem\'atica, Facultad de Ciencias \\
	Universidad de la Rep\'ublica \\
	Montevideo, Uruguay \\
	\texttt{acholaquidis@cmat.edu.uy} \\
	\And
	Manuel Hern\'andez-Banadik \\
	Instituto de Estad\'istica, Facultad de Ciencias Econ\'omicas y de Administraci\'on \\
	Universidad de la Rep\'ublica \\
	Montevideo, Uruguay \\
	\texttt{manuel.hernandez@fcea.edu.uy}
}
\date{}

\begin{document}
	
	\maketitle
	
	\begin{abstract}%
		\footnotesize
		The Cram\'er--Wold theorem characterises weak convergence of probability
		measures on \(\mathbb R^d\) through all one-dimensional projected laws.
		We prove a version in which the projections are controlled only on a set
		of directions of positive surface measure and no multivariate limit law
		is prescribed: if the projected laws converge weakly, along every
		direction in such a set, to moment-determinate limits with finite
		moments of all orders, then the sequence converges weakly to the unique
		probability measure whose projections along those directions are the
		given limits. The proof separates tightness, obtained from finitely many
		linearly independent projections, from the identification of
		subsequential limits, obtained through directional moments and
		Petersen's theorem; the identification step is in the spirit of the
		sharp Cram\'er--Wold theorem of Cuesta-Albertos, Fraiman and Ransford.
	\end{abstract}
	
	\noindent\textbf{Keywords:} Cram\'er--Wold theorem; weak convergence; moment determinacy; moment problem.
	
	\medskip
	\noindent\textbf{MSC 2020:} 60B10; 60E05; 28A33.

	\section{Introduction}
	
	By the classical Cram\'er--Wold theorem~\cite{cramerwold}, weak convergence
	of probability measures on \(\mathbb R^d\) is equivalent to weak
	convergence of all one-dimensional projected laws. This note proves a
	version of this principle in which the projections are controlled only
	along a set of directions of positive surface measure, and no candidate
	limit law on \(\mathbb R^d\) is prescribed. For \(u\in\mathbb S^{d-1}\),
	write \(\pi_u(x):=\langle u,x\rangle\), and denote by \(\pi_u(\mu)\) the
	corresponding projected law of a probability measure \(\mu\) on
	\(\mathbb R^d\). Theorem~\ref{thm:main} states that, if
	\(A\subset\mathbb S^{d-1}\) is a Borel set of positive surface measure and,
	for every \(u\in A\), the laws \(\pi_u(P_n)\) converge weakly to some
	probability law \(N_u\) on \(\mathbb R\) which is moment-determinate with
	finite moments of all orders, then \((P_n)\) converges weakly to the unique
	probability measure \(N\) on \(\mathbb R^d\) whose projections along \(A\)
	are the laws \(N_u\). The limit is thus produced by the directional data:
	its existence and uniqueness are part of the conclusion. The result is a
	qualitative convergence criterion, not a finite-sample reconstruction
	procedure.
	
	The identification underlying the argument is in the spirit of the sharp
	Cram\'er--Wold theorem of Cuesta-Albertos, Fraiman and
	Ransford~\cite{cuesta}: writing \(\pi_x(y):=\langle x,y\rangle\) also for
	\(x\in\mathbb R^d\), their Corollary~3.2 implies, under a Carleman-type
	condition on \(P\), that \(P=Q\) whenever the cone
	\(E(P,Q):=\{x:\pi_x(P)=\pi_x(Q)\}\) has positive Lebesgue measure, which,
	by polar coordinates, amounts to agreement on a set of directions of
	positive surface measure. Proposition~\ref{prop:identification} isolates
	this identification phenomenon, in a formulation whose hypotheses are
	placed directly on the projected laws.
	
	The set \(A\) supplies both ingredients of the proof of
	Theorem~\ref{thm:main}: it contains \(d\) linearly independent directions,
	which yield tightness through the corresponding one-dimensional marginals
	and transfer finite moments from the limit laws \(N_u\) to every
	subsequential limit, so that no moment or determinacy assumption is imposed
	on the latter; its positive surface measure then identifies any two
	subsequential limits through directional moments and Petersen's
	theorem~\cite{petersen}. For related results on multivariate moment
	determinacy, see~\cite{kleiberstoyanov}.
	
	A motivation for this sequential viewpoint comes from projection-based
	inverse problems. If the measures \(P_n\) have densities \(f_n\), the
	density of \(\pi_u(P_n)\) is the one-dimensional projection of \(f_n\)
	along \(u\); under the assumptions of Theorem~\ref{thm:main}, the sequence
	\((P_n)\) then converges weakly to the unique law with projections \(N_u\)
	along \(A\), independently of the reconstruction method used to obtain
	\(P_n\). Such limited-angle settings are common in tomography; see,
	e.g.,~\cite{quinto1993}.

	\section{Main results}
	
	Throughout, \(d\ge2\) is fixed and \(\sigma\) denotes the normalised
	surface measure on \(\Sph\). Recall that \(\pi_u(x):=\ip{u}{x}\) and that
	\(\pi_u(\mu):=\mu\circ\pi_u^{-1}\) is the projected law of a Borel
	probability measure \(\mu\) along \(u\in\Sph\); for a measurable map
	\(T:\mathbb R^d\to\mathbb R^d\), we write \(T_{\push}\mu:=\mu\circ T^{-1}\).
	
	If \(A\subset\Sph\) satisfies \(\sigma(A)>0\), then \(A\) is not contained
	in any proper linear subspace of \(\mathbb R^d\), because every such
	subspace intersects \(\Sph\) in a set of \(\sigma\)-measure zero. Hence
	there exist linearly independent vectors \(u_1,\dots,u_d\in A\), and
	\(T(x):=(\ip{u_1}{x},\dots,\ip{u_d}{x})\) defines a linear isomorphism of
	\(\mathbb R^d\): \(T(x)=0\) forces \(x\perp u_j\) for every \(j\), hence
	\(x=0\). In the proofs below, \(u_1,\dots,u_d\) and \(T\) are fixed in this
	way.
	
	We shall use the following classical consequence of Petersen's theorem
	\cite{petersen}.
	
	\medskip
	\noindent\textbf{Petersen's theorem.}
	Let \(\nu\) be a Borel probability measure on \(\mathbb R^d\) with finite
	moments of all orders. If each coordinate marginal of \(\nu\) is
	moment-determinate, then \(\nu\) is determined by its mixed moments: any
	Borel probability measure on \(\mathbb R^d\) with the same mixed moments
	coincides with \(\nu\).
	
	\begin{lemma} \label{lem:homogeneous-zero-set}
		If \(p\) is a nonzero homogeneous polynomial on \(\mathbb R^d\), then
		$\sigma\{\theta\in\Sph:p(\theta)=0\}=0.$
	\end{lemma}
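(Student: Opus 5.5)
The plan is to pass from the sphere to \(\mathbb R^d\) using homogeneity and polar coordinates, and then to use the classical fact that the zero set of a nonzero polynomial on \(\mathbb R^d\) is Lebesgue-null. Let \(p\) be homogeneous of degree \(k\), set \(Z:=\{\theta\in\Sph:p(\theta)=0\}\), and consider the cone \(C:=\{r\theta: r>0,\ \theta\in Z\}\). Homogeneity gives \(p(r\theta)=r^k p(\theta)\), so \(C=\{x\neq 0: p(x)=0\}\). Hence \(C\) is a Borel subset of the zero set of \(p\). By the polar coordinate formula, with \(\omega_{d-1}\) the unnormalised surface area of \(\Sph\),
\[
\lambda\bigl(C\cap\{\norm{x}<1\}\bigr)=\omega_{d-1}\,\sigma(Z)\int_0^1 r^{d-1}\,dr=\frac{\omega_{d-1}}{d}\,\sigma(Z).
\]
Therefore it suffices to prove that \(\lambda\{x\in\mathbb R^d:p(x)=0\}=0\) for every nonzero polynomial \(p\), where \(\lambda\) is Lebesgue measure.

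I will prove this by induction on \(d\). For \(d=1\), a nonzero polynomial has finitely many roots. For the inductive step, write
\[
p(x',x_d)=\sum_{j=0}^m q_j(x')\,x_d^j,
\]
where \(q_m\not\equiv 0\) is a polynomial in \(d-1\) variables. By the induction hypothesis, \(\{q_m=0\}\) is null in \(\mathbb R^{d-1}\). For each \(x'\) outside this set, the section \(\{x_d: p(x',x_d)=0\}\) is finite, hence null. Fubini–Tonelli, applied to the indicator of the closed (hence Borel) set \(\{p=0\}\), then gives \(\lambda\{p=0\}=0\). Combining this with the displayed identity yields \(\sigma(Z)=0\).

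No step here is a real obstacle. The only points that need care are the measurability of \(Z\), which is closed, and the exact form of the polar decomposition \(\lambda=\omega_{d-1}\,r^{d-1}dr\otimes\sigma\), which is standard. Nonzeroness of \(p\) enters only through the base case and through the choice of a nonvanishing top coefficient \(q_m\). Homogeneity is used only to identify the cone \(C\) with the zero set of \(p\) away from the origin. In the induction the lower-dimensional base \(d-1=1\) is also covered, even though the paper assumes \(d\ge2\).
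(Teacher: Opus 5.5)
Your proof is correct and follows the same route as the paper. Homogeneity makes \(\{p=0\}\setminus\{0\}\) the cone over the spherical zero set, and polar coordinates then turn Lebesgue-nullity of that cone inside the unit ball into \(\sigma\)-nullity of the zero set on \(\Sph\). The one difference is that you prove \(\lambda_d\{p=0\}=0\) directly by induction on \(d\) with Fubini--Tonelli, whereas the paper cites Proposition~1 of Mityagin for real-analytic functions that do not vanish identically. Your argument is self-contained and sufficient for polynomials; the citation covers a more general class of functions that the lemma does not need.
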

	
	\begin{proof}
		Let \(F:=\{x\in\mathbb R^d:p(x)=0\}\) and let \(\lambda_d\) denote
		Lebesgue measure on \(\mathbb R^d\). Since \(p\) is a nonzero
		polynomial, it is a real-analytic function on \(\mathbb R^d\) that does
		not vanish identically, so Proposition~1 in \cite{mityagin} gives
		\(\lambda_d(F)=0\). Since \(p\) is homogeneous, say of degree \(m\),
		the set \(F\) is a cone: \(p(r\theta)=r^{m}p(\theta)\) for \(r>0\), so
		that \(\mathbf 1_F(r\theta)=\mathbf 1_{F\cap\Sph}(\theta)\) for all
		\(r>0\) and \(\theta\in\Sph\). Denoting by \(\tilde\sigma\) the
		unnormalised surface measure on \(\Sph\) and by \(B(0,1)\) the unit
		ball of \(\mathbb R^d\), integration in polar coordinates (see, e.g.,
		\cite[Theorem~2.49]{folland}) gives
		\begin{multline*}
			0=\lambda_d\bigl(F\cap B(0,1)\bigr)
			=\int_0^1\!\!\int_{\Sph}\mathbf 1_F(r\theta)\,r^{d-1}\,
			\tilde\sigma(d\theta)\,dr
			=\tilde\sigma\bigl(F\cap\Sph\bigr)\int_0^1 r^{d-1}\,dr
			=\frac{\tilde\sigma(F\cap\Sph)}{d}.
		\end{multline*}
		Hence \(\tilde\sigma(F\cap\Sph)=0\), and therefore
		\(\sigma\{\theta\in\Sph:p(\theta)=0\}=0\).
	\end{proof}
	
	The following identification result, in the spirit of Corollary~3.2
	in \cite{cuesta}, is the key step.
	
	\begin{prop}\label{prop:identification}
		Let \(d\ge2\) and let \(A\subset\Sph\) be a Borel set with
		\(\sigma(A)>0\). Let \(Q_1\) and \(Q_2\) be Borel probability measures
		on \(\mathbb R^d\) such that
		\begin{equation}\label{eq:projection-equality-on-A}
			\pi_u(Q_1)=\pi_u(Q_2)\qquad\text{for every }u\in A.
		\end{equation}
		If, for every \(u\in A\), the common projected law \(\pi_u(Q_1)\) has
		finite moments of all orders and is moment-determinate, then
		\(Q_1=Q_2\).
	\end{prop}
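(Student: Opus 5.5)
The plan is to show that \(Q_1\) and \(Q_2\) have the same mixed moments, and then to conclude with Petersen's theorem applied in the coordinates given by the fixed isomorphism \(T\).

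\textbf{Step 1 (finite moments).} Since \(u_1,\dots,u_d\in A\), each \(\pi_{u_j}(Q_i)\) has finite moments of all orders. Every \(x\) can be written as \(x=T^{-1}(Tx)\), so \(\norm{x}\le C\sum_j|\ip{u_j}{x}|\). Hence \(\int\norm{x}^k\,dQ_i<\infty\) for all \(k\) and \(i=1,2\). In particular, for every \(\theta\in\Sph\) and \(m\ge0\), the quantity
\[
M_m^{(i)}(\theta):=\int\ip{\theta}{x}^m\,Q_i(dx)
\]
is finite, and it is a homogeneous polynomial of degree \(m\) in \(\theta\). Explicitly, \(M_m^{(i)}(\theta)=\sum_{|\alpha|=m}\binom{m}{\alpha}\theta^\alpha\int x^\alpha\,dQ_i\).

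\textbf{Step 2 (moments agree).} By \eqref{eq:projection-equality-on-A}, \(M_m^{(1)}(\theta)=M_m^{(2)}(\theta)\) for every \(\theta\in A\). The difference \(p_m:=M_m^{(1)}-M_m^{(2)}\) is a homogeneous polynomial of degree \(m\) that vanishes on \(A\), and \(\sigma(A)>0\). Lemma~\ref{lem:homogeneous-zero-set} therefore forces \(p_m\equiv0\). The coefficients of \(p_m\) are multinomial multiples of the differences of the mixed moments \(\int x^\alpha\,dQ_1-\int x^\alpha\,dQ_2\) with \(|\alpha|=m\), and distinct monomials are linearly independent as polynomials. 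So all mixed moments of \(Q_1\) and \(Q_2\) coincide.

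\textbf{Step 3 (Petersen).} Here the coordinate marginals of \(Q_1\) themselves need not be determinate, since the standard axes need not lie in \(A\). To handle this, pass to \(\nu_i:=T_{\push}Q_i\). The \(j\)-th coordinate marginal of \(\nu_i\) is \(\pi_{u_j}(Q_i)\), which is moment-determinate by hypothesis because \(u_j\in A\). The mixed moments of \(\nu_i\) are \(\int\prod_j\ip{u_j}{x}^{\beta_j}\,dQ_i\). Each such integrand is a polynomial in \(x\), so its integral is a linear combination of mixed moments of \(Q_i\), and these agree by Step 2. Moreover \(\nu_1\) has finite moments of all orders by Step 1. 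Petersen's theorem then gives \(\nu_1=\nu_2\), and applying \(T^{-1}\) yields \(Q_1=Q_2\).

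The main subtlety is Step 3: one must realise that determinacy is available only along directions in \(A\), which is why the change of variables by \(T\) is needed. The remaining steps are routine once Lemma~\ref{lem:homogeneous-zero-set} is in hand.
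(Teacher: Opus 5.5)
Your proposal is correct and follows essentially the same route as the paper. You bound absolute moments through the norm \(x\mapsto\sum_j|\ip{u_j}{x}|\), use Lemma~\ref{lem:homogeneous-zero-set} to kill the homogeneous polynomial of directional-moment differences vanishing on \(A\), deduce equality of all mixed moments, and then apply Petersen's theorem to \(T_{\push}Q_1\), whose coordinate marginals are the determinate laws \(\pi_{u_j}(Q_1)\).
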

	
	\begin{proof}
		Let \(u_1,\dots,u_d\in A\) and \(T\) be as in the observation above.
		For each \(j=1,\dots,d\), the common projected law
		\(\pi_{u_j}(Q_1)=\pi_{u_j}(Q_2)\) has finite moments of all orders by
		assumption, so \(\int_{\mathbb R^d}|\ip{u_j}{x}|^m\,Q_i(dx)<\infty\)
		for every \(m\ge1\) and \(i=1,2\).
		Since \(u_1,\dots,u_d\) are linearly independent, the map
		\(x\mapsto\sum_{j=1}^d|\ip{u_j}{x}|\) is a norm on \(\mathbb R^d\), so
		there exists \(C>0\) such that
		\(\norm{x}\le C\sum_{j=1}^d|\ip{u_j}{x}|\), whence
		\(\norm{x}^m \le C^m d^{m-1}\sum_{j=1}^d|\ip{u_j}{x}|^m\) for every
		\(m\ge1\). Consequently, \(Q_1\) and \(Q_2\) have finite absolute
		moments of all orders; in particular, every mixed moment of \(Q_1\) and
		\(Q_2\) is finite, each monomial of degree \(m\) in \(x\) being bounded
		in absolute value by \(\norm{x}^m\), so that
		\(\int_{\mathbb R^d}\ip{t}{x}^m\,Q_i(dx)\) is finite for every
		\(t\in\mathbb R^d\) and \(m\ge1\).
		
		Fix \(m\ge1\) and define
		\(r_m(t):=\int_{\mathbb R^d}\ip{t}{x}^m\,Q_1(dx)
		-\int_{\mathbb R^d}\ip{t}{x}^m\,Q_2(dx)\), \(t\in\mathbb R^d\).
		Expanding \(\ip{t}{x}^m\) by the multinomial theorem and integrating
		term by term, \(r_m\) is a homogeneous polynomial of degree \(m\) in
		\(t\), whose coefficients are multinomial coefficients times
		differences of the corresponding mixed moments of order \(m\) of
		\(Q_1\) and \(Q_2\). By \eqref{eq:projection-equality-on-A},
		\(r_m(t)=0\) for every \(t\in A\); if \(r_m\) were not identically
		zero, Lemma~\ref{lem:homogeneous-zero-set} would imply that its zero
		set on \(\Sph\) has \(\sigma\)-measure zero, contradicting
		\(\sigma(A)>0\). Hence \(r_m\equiv0\) and, since multinomial
		coefficients are positive, \(Q_1\) and \(Q_2\) have the same mixed
		moments of order \(m\); as \(m\ge1\) was arbitrary, they have the same
		mixed moments of all orders.
		
		Because \(T\) is linear, each mixed moment of \(T_{\push}Q_i\) is a
		linear combination, with coefficients depending only on \(T\), of mixed
		moments of \(Q_i\); thus \(T_{\push}Q_1\) and \(T_{\push}Q_2\) have the
		same mixed moments of all orders. The measure \(T_{\push}Q_1\) has
		finite moments of all orders, because \(Q_1\) does and \(T\) is linear,
		and, for each \(j=1,\dots,d\), its \(j\)-th coordinate marginal is
		\(\pi_{u_j}(Q_1)\), which is moment-determinate by assumption because
		\(u_j\in A\). Petersen's theorem applied to \(T_{\push}Q_1\) gives
		\(T_{\push}Q_2=T_{\push}Q_1\), and, since \(T\) is a linear
		isomorphism, \(Q_1=Q_2\).
	\end{proof}
	
	We can now state the main result; no multivariate target law appears in
	its hypotheses.
	
	\begin{theorem}
		\label{thm:main}
		Let \((P_n)_{n\ge1}\) be Borel probability measures on
		\(\mathbb R^d\), with \(d\ge2\), and let \(A\subset\mathbb S^{d-1}\) be
		a Borel set with \(\sigma(A)>0\). Assume that, for every \(u\in A\),
		the sequence \(\bigl(\pi_u(P_n)\bigr)_{n\ge1}\) converges weakly to
		some probability measure \(N_u\) on \(\mathbb R\), and that each
		\(N_u\) has finite moments of all orders and is moment-determinate.
		Then there exists a Borel probability measure \(N\) on \(\mathbb R^d\)
		such that \(P_n\weakto N\). Moreover, \(\pi_u(N)=N_u\) for every
		\(u\in A\), and \(N\) is the unique Borel probability measure on
		\(\mathbb R^d\) with this property.
	\end{theorem}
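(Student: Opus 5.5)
The proof splits into three steps: tightness, identification of subsequential limits, and the standard subsequence argument.

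\emph{Tightness.} Take \(u_1,\dots,u_d\in A\) and \(T\) as in the observation preceding Lemma~\ref{lem:homogeneous-zero-set}. For each \(j\), the sequence \(\pi_{u_j}(P_n)\) converges weakly to \(N_{u_j}\), so it is tight on \(\mathbb R\). Given \(\varepsilon>0\), choose \(M\) with \(P_n(|\ip{u_j}{x}|>M)<\varepsilon/d\) for all \(n\) and \(j\). By the norm inequality \(\norm{x}\le C\sum_j|\ip{u_j}{x}|\) used in the proof of Proposition~\ref{prop:identification}, the compact set \(\{\norm{x}\le CdM\}\) then carries \(P_n\)-mass at least \(1-\varepsilon\) for every \(n\). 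Hence \((P_n)\) is tight, and by Prokhorov's theorem every subsequence has a further subsequence converging weakly to some Borel probability measure.

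\emph{Identification of subsequential limits.} Suppose \(P_{n_k}\weakto Q\). Each \(\pi_u\) is continuous, so the continuous mapping theorem gives \(\pi_u(P_{n_k})\weakto\pi_u(Q)\). Weak limits on \(\mathbb R\) are unique, so \(\pi_u(Q)=N_u\) for every \(u\in A\). This is the point where finite moments and determinacy pass to \(Q\) for free, because they are hypotheses on the \(N_u\) and not on \(Q\). If \(Q'\) is another subsequential limit, then \(\pi_u(Q)=\pi_u(Q')=N_u\) for all \(u\in A\), and each \(N_u\) has finite moments of all orders and is moment-determinate. Proposition~\ref{prop:identification} therefore yields \(Q=Q'\). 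Call this common limit \(N\).

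\emph{Conclusion.} A tight sequence all of whose weakly convergent subsequences have the same limit \(N\) converges weakly to \(N\). If not, there would be a bounded continuous \(f\) and a subsequence along which \(|\int f\,dP_n-\int f\,dN|\ge\delta>0\). By Prokhorov, that subsequence has a further subsequence converging weakly, necessarily to \(N\), which is a contradiction. So \(P_n\weakto N\), and \(\pi_u(N)=N_u\) on \(A\) by the identification step. Uniqueness follows from Proposition~\ref{prop:identification} once more: any \(N'\) with \(\pi_u(N')=N_u\) for all \(u\in A\) satisfies \eqref{eq:projection-equality-on-A} with \(N\), so \(N'=N\).

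\emph{Main obstacle.} The one substantive step is the identification of subsequential limits, and that is already handled by Proposition~\ref{prop:identification}. The only care needed is to check that the hypotheses of the proposition are placed on the common projected laws, which here are the \(N_u\), and not on \(Q\) itself. The rest is routine Prokhorov bookkeeping.
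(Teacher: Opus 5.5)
Your proof is correct and follows the paper's argument essentially step by step. Tightness comes from the $d$ linearly independent directions in $A$: you make it explicit via the norm inequality $\norm{x}\le C\sum_j|\ip{u_j}{x}|$, whereas the paper passes through tightness of $T_{\push}P_n$ and continuity of $T^{-1}$. As in the paper, you then identify subsequential limits and obtain uniqueness via Proposition~\ref{prop:identification}, and finish with the standard subsequence argument.
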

	
	\begin{proof}
		Let \(u_1,\dots,u_d\in A\) and \(T\) be as in the observation above.
		For each \(j=1,\dots,d\), the sequence \(\pi_{u_j}(P_n)\) converges
		weakly by assumption and is therefore tight. Since these laws are the
		coordinate marginals of \(T_{\push}P_n\), the sequence
		\((T_{\push}P_n)\) is tight in \(\mathbb R^d\); as \(T^{-1}\) is
		continuous, \((P_n)\) is tight.
		
		By Prokhorov's theorem, there exists a subsequence
		\(P_{n_k}\weakto Q\). For \(u\in A\), the continuous mapping theorem
		gives \(\pi_u(P_{n_k})\weakto\pi_u(Q)\), while
		\(\pi_u(P_{n_k})\weakto N_u\) by assumption, so \(\pi_u(Q)=N_u\).
		Hence every subsequential weak limit of \((P_n)\) has projections
		\(N_u\) along \(A\); since the \(N_u\) have finite moments of all
		orders and are moment-determinate,
		Proposition~\ref{prop:identification} shows that any two subsequential
		weak limits coincide. Denote by \(N\) their common value. Every
		subsequence of \((P_n)\) has a further weakly convergent subsequence,
		which necessarily converges to \(N\); hence \(P_n\weakto N\), and
		\(\pi_u(N)=N_u\) for every \(u\in A\). Finally, any Borel probability
		measure \(M\) with \(\pi_u(M)=N_u\) for every \(u\in A\) equals \(N\),
		by Proposition~\ref{prop:identification}.
	\end{proof}
	
	When a candidate limit is available, Theorem~\ref{thm:main} specialises to
	the following target-specific criterion.
	
	\begin{corollary}\label{cor:target}
		Let \((P_n)_{n\ge1}\) and \(P\) be Borel probability measures on
		\(\mathbb R^d\), with \(d\ge2\), and let \(A\subset\mathbb S^{d-1}\) be
		a Borel set with \(\sigma(A)>0\). If \(\pi_u(P_n)\weakto\pi_u(P)\) for
		every \(u\in A\), and each projected law \(\pi_u(P)\), \(u\in A\), has
		finite moments of all orders and is moment-determinate, then
		\(P_n\weakto P\).
	\end{corollary}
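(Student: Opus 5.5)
The plan is to apply Theorem~\ref{thm:main} with \(N_u:=\pi_u(P)\) for \(u\in A\), and then identify the resulting limit with \(P\) through the uniqueness clause of that theorem.

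First we check the hypotheses. The sequence \((P_n)\) consists of Borel probability measures on \(\mathbb R^d\), and \(A\) is Borel with \(\sigma(A)>0\). For every \(u\in A\), we have \(\pi_u(P_n)\weakto N_u\) by assumption. Each \(N_u\) is a probability measure on \(\mathbb R\), being the image of the probability measure \(P\) under the continuous map \(\pi_u\). By assumption each \(N_u\) also has finite moments of all orders and is moment-determinate. So every hypothesis of Theorem~\ref{thm:main} holds.

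Theorem~\ref{thm:main} then gives a Borel probability measure \(N\) on \(\mathbb R^d\) with \(P_n\weakto N\). It also says that \(N\) is the unique Borel probability measure satisfying \(\pi_u(N)=N_u\) for every \(u\in A\). The measure \(P\) itself satisfies \(\pi_u(P)=N_u\) for all \(u\in A\) by definition of \(N_u\), so uniqueness forces \(N=P\), and hence \(P_n\weakto P\). Alternatively, one can apply Proposition~\ref{prop:identification} directly to \(Q_1=N\) and \(Q_2=P\), since their projections agree on \(A\) and the common projected laws satisfy the moment hypotheses.

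There is no real obstacle here. The only point needing care is that the limits \(N_u\) are probability measures, which Theorem~\ref{thm:main} requires and which holds automatically because they are projections of \(P\).
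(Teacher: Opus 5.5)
Your proposal is correct and follows the paper's proof: you apply Theorem~\ref{thm:main} with \(N_u:=\pi_u(P)\) and then use its uniqueness clause to conclude \(N=P\). Your alternative of applying Proposition~\ref{prop:identification} to \(N\) and \(P\) is the same argument, since that proposition is exactly what the paper uses to prove the uniqueness clause.
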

	
	\begin{proof}
		Apply Theorem~\ref{thm:main} with \(N_u:=\pi_u(P)\): the sequence
		converges weakly to a law \(N\) with \(\pi_u(N)=\pi_u(P)\) for every
		\(u\in A\), and the uniqueness assertion of Theorem~\ref{thm:main}
		gives \(N=P\).
	\end{proof}
	
	\begin{remark}
		The moment and determinacy assumptions in
		Proposition~\ref{prop:identification} and Theorem~\ref{thm:main} are used
		only along the finitely many linearly independent directions
		\(u_1,\dots,u_d\in A\) defining (T). On the rest of \(A\), the argument uses
		only equality of the projected laws, which forces \(r_m\equiv0\) through
		Lemma~\ref{lem:homogeneous-zero-set}.
	\end{remark}
	
	\begin{remark}
		Carleman's condition,
		\(\sum_{m\ge 1}\left(\int_{\mathbb R}|t|^{2m}\,\nu(dt)\right)^{-1/(2m)}=\infty,
		\)
		is sufficient for moment determinacy on \(\mathbb R\); see \cite[Chapter~4]{shohat}
		and \cite{cuesta}. In particular, Gaussian laws on \(\mathbb R\), including
		degenerate ones, have finite moments of all orders and are moment-determinate.
		If all \(N_u\), \(u\in A\), in Theorem~\ref{thm:main} are Gaussian, then the limit
		\(N\) is Gaussian: let \(G\) be the Gaussian law on \(\mathbb R^{d}\) with the
		same mean and covariance as \(N\); for each \(u\in A\), \(\pi_u(G)\) and
		\(\pi_u(N)=N_u\) are one-dimensional Gaussian laws with the same mean and
		variance, hence coincide, and uniqueness gives \(N=G\).
	\end{remark}

	\section*{Acknowledgements}
	We are grateful to the editor, and an anonymous referee for suggestions.

\end{document}